\def\XXint#1#2#3{{\setbox0=\hbox{$#1{#2#3}{\int}$}
     \vcenter{\hbox{$#2#3$}}\kern-.5\wd0}}
\newtheorem{theorem}{Theorem}[section]
\newtheorem{lemma}[theorem]{Lemma}
\newtheorem{corollary}[theorem]{Corollary}
\newtheorem{proposition}[theorem]{Proposition}
\theoremstyle{definition}
\theoremstyle{remark}
\newtheorem{remark}[theorem]{Remark}
\newcommand{\mysection}[1]{\section{#1}
\setcounter{equation}{0}}
\newcommand{\bR}{\mathbb R}
\newcommand{\bZ}{\mathbb Z}
\renewcommand{\epsilon}{\varepsilon}
\begin{document}
\title[quasi-geostrophic equations]
{A regularity criterion for the dissipative quasi-geostrophic
equations}

\author[H. Dong]{Hongjie Dong}
\address[H. Dong]
{The Division of Applied Mathematics, Brown University,
182 George Street, Box F, Providence, RI 02912}
\email{hdong@brown.edu}

\author[N. Pavlovi\'{c}]{Nata\v{s}a Pavlovi\'{c}}
\address[N. Pavlovi\'{c}]
{Department of Mathematics, University of Texas at Austin,
1 University Station, C1200, Austin, Texas 78712}
\email{natasa@math.utexas.edu}

\thanks{H.D. was partially supported by a start-up funding from the Division
of Applied Mathematics of Brown University. N.P. was partially supported by a start-up funding
from the College of Natural Sciences of the University of Texas at Austin.}

\subjclass{35Q35}

\keywords{regularity criteria, quasi-geostrophic equations}

\begin{abstract}
We establish a regularity criterion for weak solutions of the
dissipative quasi-geostrophic equations in mixed time-space Besov
spaces.
\end{abstract}

\maketitle

\mysection{Introduction}
                                                    \label{intro}
In this paper we obtain a regularity criterion for weak solutions of
the 2D dissipative quasi-geostrophic equations.
We consider the following initial value problem
\begin{equation} \label{qgeq1}
\left\{\begin{array}{l l}
\theta_t+u\cdot \nabla\theta+(-\Delta)^{\gamma/2}\theta=0,
\quad x \in \bR^2, t \in (0,\infty),\\
\theta(0,x)=\theta_0(x),\end{array}\right.
\end{equation}
where $\gamma\in (0,2]$ is a fixed parameter and the velocity
$u=(u_1,u_2)$ is divergence free and determined by the Riesz
transforms of the potential temperature $\theta$:
$$
u=(-{\mathcal R}_2\theta,{\mathcal R}_1\theta)=(-\partial_{x_2}
(-\Delta)^{-1/2}\theta,
\partial_{x_1}(-\Delta)^{-1/2}\theta).
$$

The 2D quasi-geostrophic equation is an important model in
geophysical fluid dynamics used in meteorology and oceanography
(see, for example, Pedlosky \cite{Pe87}).
It is derived from general quasi-geostrophic equations
in the special case of constant
potential vorticity and buoyancy frequency.

The main mathematical question concerning the initial value problem
\eqref{qgeq1} is whether there exists a global in time smooth
solution to \eqref{qgeq1} evolving from any given smooth initial
data. Before we recall the known results in this direction we note
that cases $\gamma>1$, $\gamma=1$ and $\gamma<1$ are called
subcritical, critical and supercritical, respectively. Existence of
a global weak solution was established by Resnick \cite{Re95}.
Furthermore, in the subcritical case, Constantin and Wu \cite{CW99}
proved that every sufficiently smooth initial data give a rise to a
unique global smooth solution. In the critical case, $\gamma = 1$,
Constantin, Cordoba and Wu \cite{CCW01} established existence of a
unique global classical solution corresponding to any initial data
that are small in $L^{\infty}$. The assumption requiring smallness
in $L^{\infty}$ was removed by Caffarelli and Vasseur \cite{CV06},
Kiselev, Nazarov and Volberg \cite{KNV07} and Dong and Du
\cite{dongdu}. In \cite{KNV07} the authors proved persistence of a
global solution in $C^{\infty}$ corresponding to any $C^{\infty}$
periodic initial data. Dong and Du in \cite{dongdu} adapted the
method of \cite{KNV07} and obtained global well-posedness for the
critical 2D dissipative quasi-geostrophic equations with $H^1$
initial data in the whole space. On the other hand, Caffarelli and
Vasseur established regularity of Leray-Hopf solution by proving the
following three claims:
\begin{enumerate}
\item Every Leray-Hopf weak  solution corresponding to initial data
$\theta_0 \in L^2$ is in $L^{\infty}_{\text{loc}}({\mathbb R}^2
\times (0, \infty))$
\item The $L^{\infty}$ solutions are H\"{o}lder regular i.e. they are in  $C^{\gamma}$
for some $\gamma > 0$
\item Every H\"{o}lder regular solution is a classical solution in $C^{1, \beta}$.
\end{enumerate}

While the main question addressing global in time existence is
settled in the critical case, it still remains open in the
supercritical case, $\gamma < 1$. In this case Chae and Lee
\cite{chaeLee}, Wu \cite{Wu04} and Chen, Miao and Zhang \cite{miao}
established existence of a global solution in Besov spaces evolving
from small initial data (see also \cite{miura, ju4}). Recently,
Constantin and Wu in \cite{CW06} implemented the approach of
\cite{CV06} in the supercritical case. They proved that every
Leray-Hopf weak solution corresponding to initial data $\theta_0 \in
L^2$ is in $L^{\infty}_{\text{loc}}({\mathbb R}^n \times (0,
\infty))$ and hence the claim (1) is valid in the supercritical
case. Concerning an analogue of the claim (2), Constantin and Wu in
\cite{CW06} proved that $L^{\infty}$ solutions are H\"{o}lder
continuous under the additional assumption that the velocity $u \in
C^{1-\gamma}$. In a separate paper \cite{CW06-Hol} Constantin and Wu
considered the step (3) of the above approach and established a
conditional regularity result of the type: if a Leray-Hopf solution
is in the sub-critical space $L^\infty((t_0,t_1);C^\delta(\bR^2))$
for some $\delta>1-\gamma$ on the time interval $[t_0, t_1]$, then
such a solution is a classical solution on $(t_0, t_1]$.

In this paper we extend the conditional regularity result of \cite{CW06-Hol} to
scaling invariant mixed time-space Besov spaces
$L^{r_0}((0, T); B_{p, \infty}^{\alpha})$ with
\begin{equation} \label{alp}
\alpha = \frac{2}{p} + 1 - \gamma + \frac{\gamma}{r_0}.
\end{equation}
More precisely, we show that if
$$\theta\in L^{r_0}_t((0,T);B^{\alpha}_{p,\infty}(\bR^2))$$
is a weak solution of the 2D quasi-geostrophic equation \eqref{qgeq1},
then $\theta$ is a classical solution
of \eqref{qgeq1} in $(0,T] \times {\mathbb R}^2$. Significance of
this space is that it is a critical space, by which we mean scaling invariant
under the scaling transformation
$$ \theta_{\lambda} = \lambda^{\gamma - 1} \; \theta(\lambda x, \lambda^{\gamma} t).$$
Since the following embedding relations
$$
L^\infty_tL^2_x\cap L^\infty_tC^\delta_x
\hookrightarrow
L^\infty_tL^2_x\cap L_t^\infty \dot B_{p,\infty}^{\delta(1-\frac 2 p)}
\hookrightarrow
L_t^{r_0}B_{p,\infty}^{\alpha},
$$
hold for sufficiently large $p$ and $r_0$,
our regularity result can be understood as an extension of
the regularity result of Constantin and Wu \cite{CW06-Hol}
to critical spaces.

In order to prove the regularity result we first establish local
existence and uniqueness of weak solutions to \eqref{qgeq1} in
certain mixed time-space Besov spaces of Chemin type ${\tilde L}^r
B^{\alpha}_{p,q}$ (for a definition of this space, see Section
\ref{notation}). We prove such existence and uniqueness results
following the approach of Q. Chen et al \cite{miao}. We choose
$\alpha$ according to \eqref{alp} which in turn implies that the
space $B^{\alpha}_{p,q}$ itself is subcritical. Therefore the time
of existence depends only on the norm of the initial data and not on
the profile. We combine the local existence (stated in Proposition
\ref{prop-lwp}) and uniqueness of weak solutions (stated in
Proposition \ref{prop-unique}) to prove regularity by using a
contradiction argument in the spirit of the work of Giga \cite{Gi86}
in the context of the Navier-Stokes equations.

We recall that the first conditional regularity result for solutions
to \eqref{qgeq1} was obtained by Constantin, Majda and Tabak
\cite{CMT}. Recently  Chae  established a conditional regularity
result in Sobolev spaces in \cite{chae} and in Triebel-Lizorkin
spaces in \cite{chae03}, while B.-Q. Dong and Chen in \cite{DC}
extended the regularity criterion of Chae \cite{chae} to Besov
spaces by proving that a solution to \eqref{qgeq1} is regular on the
time interval $(0,T]$ if
$$\nabla \theta  \in L^r((0,T); \dot{B}^{0}_{p, \infty}) \mbox{ with }
\frac{2}{p} + \frac{\gamma}{r} = \gamma, \; \; \frac{4}{\gamma} \leq p \leq \infty.$$
In comparison with
\cite{DC} we require less regularity for $\theta$.
We note that these conditional regularity results are in the spirit
of the conditional regularity results available for the 3D Navier-Stokes equations
e.g. \cite{La, Pr, Serrin1, ESS, CS}.

\subsection*{Organization of the paper}
The paper is organized as follows. In Section \ref{notation} we
introduce the notation that shall be used throughout the paper and
we review known estimates on the nonlinear term. In Section
\ref{main} we state the main results of the paper. Then in Section
\ref{proofs} we give proof of the existence and regularity results,
while in the appendix Section \ref{apen} we fill out details of the
existence result stated in Section \ref{main}.

\mysection{Notation and preliminaries} \label{notation}

\subsection{Notation and spaces}

We recall that for any $\beta\in \bR$ the fractional Laplacian $(-\Delta)^\beta$
is defined via its Fourier transform:
$$\widehat{(-\Delta)^\beta f}(\xi)=|\xi|^{2\beta}\hat f(\xi).$$

We note that by a weak solution to \eqref{qgeq1}
we mean $\theta(t, x)$ in $(0,\infty) \times {\mathbb R}^2$
such that
for any smooth function $\phi(t,x)$ satisfying $\phi(t,\cdot) \in \mathcal{S}$ 
for each $t$, the identity
\begin{align*}
& \int_{{\mathbb R}^2} \theta(T,\cdot) \phi(T,\cdot) \; dx
- \int_{{\mathbb R}^2} \theta(0,\cdot) \phi(0,\cdot)\; dx
- \int_{0}^{T} \int_{{\mathbb R}^2} \theta \phi_t \; dx \; dt\\
&- \int_{0}^{T} \int_{{\mathbb R}^2} u \theta \nabla\phi \; dx \; dt
+ \int_{0}^{T} \int_{{\mathbb R}^2} \theta \Lambda^\gamma \phi \; dx \; dt
=0
\end{align*}
holds for any $T>0$.

Before we recall the definition of the spaces that will be used throughout the paper, we
shall review the Littlewood-Paley decomposition. For any integer
$j$, define $\Delta_j$ to be the Littlewood-Paley projection
operator with $\Delta_j v=\phi_j*v$, where
$$
\hat\phi_j(\xi)=\hat\phi(2^{-j}\xi),\quad \hat\phi \in
C_0^\infty(\bR^2\setminus\{0\}),\quad \hat\phi\geq 0,
$$
$$
\text{supp}\,\hat \phi\subset\{\xi\in \bR^2\,|\,1/2\leq |\xi|\leq 2\},
\quad \sum_{j\in \bZ}\hat\phi_j(\xi)=1\,\,\text{for}\,\,\xi\neq 0.
$$
Formally, we have the Littlewood-Paley decomposition
$$
v(\cdot,t)=\sum_{j\in \mathbb{Z}}\Delta_j v(\cdot,t).
$$
Also denote
\begin{equation*}
\Lambda=(-\Delta)^{1/2},\quad \bar\Delta_{-1}=\sum_{j<0}\Delta_j.
\end{equation*}

As usual, for any $p\in [1,\infty)$ and $s\geq 0$, we denote by $\dot
W^{s}_p$ and $W^{s}_p$, respectively the homogeneous and inhomogeneous
Sobolev spaces with norms
\begin{align*}
\|v\|_{\dot W^{s}_p}:=&\Big\|(\sum_{k\in \mathbb{Z}}|2^{ks}\Delta_k
v|^2)^{1/2}\Big\|_{L^p}\sim \|\Lambda^s v\|_{L^p},\\
\|v\|_{W^{s}_p}:= &\|v\|_{\dot W^{s}_p}+\|v\|_{L^p}.
\end{align*}
When $p=2$, we use $\dot H^s$ and $H^s$ instead of $\dot W^{s}_p$
and $W^{s}_p$. For any $p,q\in [1,\infty]$ and $s\in  \bR$, we
denote by $\dot B^{s}_{p,q}$ and $B^{s}_{p,q}$, respectively the homogeneous
and inhomogeneous Besov spaces equipped with norms
\begin{align*} \|v\|_{\dot
B^{s}_{p,q}}:=&\left\{\begin{array}{l l}
\Big(\sum_{j\in \bZ} 2^{jsq}\|\Delta_j v \|_{L^p}^q\Big)^{1/q},\quad\text{for}\,\,q<\infty,\\
\sup_{j\in \bZ}2^{js}\|\Delta_j v \|_{L^p},\quad\text{for}\,\,q=\infty,
\end{array}\right.\\
\|v\|_{B^{s}_{p,q}}:=&\left\{\begin{array}{l l} \Big(\sum_{j\geq 0}
2^{jsq}\|\Delta_j v\|_{L^p}^q\Big)^{1/q}+
\|{\bar\Delta_{-1}}v\|_{L^p},\quad\text{for}\,\,q<\infty,\\
\sup_{j\geq
0}2^{js}\|\Delta_j v \|_{L^p}+\|{\bar\Delta_{-1}}v\|_{L^p}.\quad\text{for}\,\,q=\infty,
\end{array}\right.
\end{align*}
If $s>0$, we have
$$
B^{s}_{p,q}=\dot B^{s}_{p,q}\cap L^p,\quad \|v\|_{B^{s}_{p,q}}\sim \|v\|_{\dot B^{s}_{p,q}}+\|v\|_{L^p}.
$$

For $s\in \bR$, $1\le p, q,r\le \infty$, $I$ an interval in $\bR$, the homogeneous mixed
time-space Besov space $\tilde L^r(I;\dot B^{s}_{p,q})$ is the space of distributions
in $\mathcal D(I;  \mathcal S_0^\prime(\bR^d))$ such that
\begin{align*}
\| f\|_{\widetilde{L}^r(I;\dot B^{s}_{p,q})} := \left \| 2^{sj} \left( \int_I \| \Delta_j f(t) \|_{L^p(\bR^d)}^r dt
\right)^{1/r} \right\|_{l^q(\bZ)} < \infty,
\end{align*}
(usual modification applied if $r=\infty$ or $q=\infty$). Also the
inhomogeneous time-space Besov norm is given by
\begin{align*}
\| f\|_{\widetilde L^r(I;B^{s}_{p,q})}
: = \| f\|_{L^r(I;L^p(\bR^d))} + \| f\|_{\tilde L^r(I;\dot B^{s}_{p,q})}.
\end{align*}
These spaces were introduced by J.-Y. Chemin \cite{Chemin}.

\subsection{Preliminaries}
The following Bernstein's inequality is well-known.
\begin{lemma}
                                        \label{bern}
i) Let $p\in [1,\infty]$ and $s\in \bR$. Then for any $j\in \bZ$, we
have
\begin{equation}
                                        \label{eq5.43}
\lambda 2^{js} \|\Delta_j v\|_{L^p}\leq \|\Lambda^s\Delta_j
v\|_{L^p}\leq \lambda' 2^{js} \|\Delta_j v\|_{L^p}
\end{equation}
with  some constants $\lambda$ and $\lambda'$ depending only on $p$
and $s$.

ii) Moreover, for $1\leq p\leq q\leq \infty$, there exists a
positive constant $C$ depending only on $p$ and $q$ such that
\begin{equation}
                                        \label{eq5.48}
\|\Delta_j v\|_{L^q}\leq C2^{(1/p-1/q)dj}\|\Delta_j v\|_{L^p}.
\end{equation}
\end{lemma}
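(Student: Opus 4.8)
The plan is to reduce everything to Young's convolution inequality together with the scaling of the Littlewood--Paley blocks. The single mechanism behind both parts is that $\Delta_j v$ has Fourier transform supported in the annulus $\{2^{j-1}\le |\xi|\le 2^{j+1}\}$, which stays bounded away from the origin; hence multipliers such as $|\xi|^s$ are smooth there and, after rescaling, act as convolution against a fixed integrable kernel. I would therefore fix once and for all a smooth cutoff $\tilde\phi$ supported in a slightly larger annulus with $\tilde\phi\equiv 1$ on $\text{supp}\,\hat\phi$, and use it to fatten every projection.

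For the upper bound in (i), I would write, using $\hat\phi(2^{-j}\xi)=\tilde\phi(2^{-j}\xi)\hat\phi(2^{-j}\xi)$,
\[
\widehat{\Lambda^s\Delta_j v}(\xi)=|\xi|^s\tilde\phi(2^{-j}\xi)\,\widehat{\Delta_j v}(\xi)=2^{js}\,g(2^{-j}\xi)\,\widehat{\Delta_j v}(\xi),\qquad g(\eta):=|\eta|^s\tilde\phi(\eta).
\]
Since $g$ is smooth and compactly supported away from the origin, its inverse Fourier transform $h=\mathcal F^{-1}g$ is Schwartz, so $h\in L^1$; by scaling $\mathcal F^{-1}(g(2^{-j}\cdot))=2^{jd}h(2^j\cdot)$, whose $L^1$ norm equals $\|h\|_{L^1}$. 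Thus $\Lambda^s\Delta_j v=2^{js}\,(2^{jd}h(2^j\cdot))*\Delta_j v$, and Young's inequality gives the right-hand bound with $\lambda'=\|h\|_{L^1}$.

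For the lower bound I would invert $\Lambda^s$ on the annulus: since $\mathcal F(\Lambda^s\Delta_j v)$ is again supported where $\tilde\phi(2^{-j}\cdot)\equiv 1$, one has $\Delta_j v=\Lambda^{-s}\tilde\Delta_j(\Lambda^s\Delta_j v)$, where $\tilde\Delta_j$ has symbol $\tilde\phi(2^{-j}\xi)$. Repeating the previous computation with $s$ replaced by $-s$ (and kernel $\tilde h=\mathcal F^{-1}(|\eta|^{-s}\tilde\phi(\eta))\in L^1$) yields $\|\Delta_j v\|_{L^p}\le 2^{-js}\|\tilde h\|_{L^1}\|\Lambda^s\Delta_j v\|_{L^p}$, i.e. the left-hand bound with $\lambda=\|\tilde h\|_{L^1}^{-1}$. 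For part (ii) I would write $\Delta_j v=\tilde\phi_j*\Delta_j v$ with $\tilde\phi_j=2^{jd}\tilde\phi(2^j\cdot)$ and apply Young's inequality with exponents $1+\tfrac1q=\tfrac1r+\tfrac1p$; the condition $p\le q$ is exactly what forces $r\ge 1$, and the scaling identity $\|\tilde\phi_j\|_{L^r}=2^{jd(1-1/r)}\|\tilde\phi\|_{L^r}=2^{jd(1/p-1/q)}\|\tilde\phi\|_{L^r}$ produces the claimed constant.

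The computations are routine; the one point deserving care is the lower bound in (i), namely the legitimacy of inverting $\Lambda^s$. This works precisely because the fattened cutoff keeps the symbol $|\eta|^{-s}\tilde\phi(\eta)$ away from the origin, so it is smooth and compactly supported with an $L^1$ kernel. In particular the singularity of $|\xi|^{s}$ at $\xi=0$ for $s<0$ never enters, and no separate treatment of the sign of $s$ is needed.
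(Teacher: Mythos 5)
Your proof is correct: the fattened annulus cutoff $\tilde\phi$, the scaling of the kernels $2^{jd}h(2^j\cdot)$, and Young's inequality are exactly the standard mechanism, and you handle the only delicate point (inverting $\Lambda^s$ via the compactly supported symbol $|\eta|^{-s}\tilde\phi(\eta)$, valid for either sign of $s$) correctly. The paper itself gives no proof --- it states the lemma as well-known --- and your argument is precisely the classical one being referenced, so there is nothing substantive to compare (only a harmless notational conflation in part (ii), where $\tilde\phi_j=2^{jd}\tilde\phi(2^j\cdot)$ should really be $2^{jd}(\mathcal F^{-1}\tilde\phi)(2^j\cdot)$).
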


Now we recall the generalized Bernstein's inequality and a lower bound
for an integral involving fractional Laplacian which will be used in the paper. They
can be found in \cite{wu4}, \cite{ju4} and \cite{miao}.
\begin{lemma}
                \label{lowerbd}
i) Let $p\in [2,\infty)$ and $\gamma\in [0,2]$. Then for any $j\in \bZ$, we
have
\begin{equation}
                                        \label{eq2.5.43}
\lambda 2^{\gamma j/p} \|\Delta_j v\|_{L^p}\leq
\|\Lambda^{\gamma/2}(|\Delta_j v|^{p/2})\|_{L^2}^{2/p}\leq \lambda'
2^{\gamma j/p} \|\Delta_j v\|_{L^p}
\end{equation}
with  some positive constants $\lambda$ and $\lambda'$ depending only on $p$ and $\gamma$.

ii) Moreover, we have
\begin{equation}
                \label{eq2.2.37}
\int_{\bR^2}(\Lambda^\gamma v)|v|^{p-2} v\geq c\|\Lambda^{\gamma/2}|v|^{p/2}\|_{L^2}^2,
\end{equation}
and
\begin{equation}
                \label{eq2.2.36}
\int_{\bR^2}(\Lambda^\gamma\Delta_j v)|\Delta_j v|^{p-2}\Delta_j v\geq c2^{\gamma j}\|\Delta_j v\|_{L^p}^p,
\end{equation} with  some positive constant $c$ depending only on $p$ and $\gamma$.
\end{lemma}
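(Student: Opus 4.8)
The plan is to follow Ju \cite{ju4}, Wu \cite{wu4}, and Chen--Miao--Zhang \cite{miao}, and to treat the four inequalities in an order in which the elementary ones feed the harder ones: first the lower bound \eqref{eq2.2.37} in (ii), then the upper bound in (i), then the lower bound in (i) (the technical heart), and finally \eqref{eq2.2.36}.

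For \eqref{eq2.2.37} I would first dispose of the endpoint $\gamma=2$, where $\Lambda^\gamma=-\Delta$ and integration by parts gives $\int(-\Delta v)|v|^{p-2}v=(p-1)\int|v|^{p-2}|\nabla v|^2=\tfrac{4(p-1)}{p^2}\|\nabla(|v|^{p/2})\|_{L^2}^2$. For $\gamma\in(0,2)$ I would use the singular integral representation
\[ \Lambda^\gamma f(x)=c_\gamma\,\mathrm{P.V.}\int_{\bR^2}\frac{f(x)-f(y)}{|x-y|^{2+\gamma}}\,dy, \]
and symmetrize in $x\leftrightarrow y$: this turns $\int(\Lambda^\gamma v)|v|^{p-2}v\,dx$ and $\|\Lambda^{\gamma/2}(|v|^{p/2})\|_{L^2}^2$ into Gagliardo-type double integrals with kernel $|x-y|^{-2-\gamma}$ and numerators $(|v(x)|^{p-2}v(x)-|v(y)|^{p-2}v(y))(v(x)-v(y))$ and $(|v(x)|^{p/2}-|v(y)|^{p/2})^2$, respectively. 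The inequality \eqref{eq2.2.37} then reduces to the elementary pointwise estimate
\[ (|a|^{p-2}a-|b|^{p-2}b)(a-b)\ge \tfrac{4(p-1)}{p^2}\,(|a|^{p/2}-|b|^{p/2})^2, \]
valid for all real $a,b$ and $p\ge 2$.

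The upper bound in (i) is then almost immediate. Writing $u=\Delta_j v$, H\"older's inequality together with the classical Bernstein estimate of Lemma \ref{bern}(i) applied to the spectrally localized $u$ gives $\int(\Lambda^\gamma u)|u|^{p-2}u\le\|\Lambda^\gamma u\|_{L^p}\|u\|_{L^p}^{p-1}\le C2^{\gamma j}\|u\|_{L^p}^p$. Combining this with \eqref{eq2.2.37} applied to $u$ yields $c\,\|\Lambda^{\gamma/2}(|u|^{p/2})\|_{L^2}^2\le C2^{\gamma j}\|u\|_{L^p}^p$, and taking the $(2/p)$-th power produces the right-hand inequality in \eqref{eq2.5.43}. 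The lower bound in (i) is the main obstacle. Set $w=|u|^{p/2}$, so $\|w\|_{L^2}^2=\|u\|_{L^p}^p$; I want $\|\Lambda^{\gamma/2}w\|_{L^2}^2\ge c2^{\gamma j}\|w\|_{L^2}^2$. Splitting in frequency at $2^{j-N}$ and using Plancherel, $\|\Lambda^{\gamma/2}w\|_{L^2}^2=\int|\xi|^\gamma|\hat w(\xi)|^2\,d\xi\ge 2^{\gamma(j-N)}\|w-P_{<j-N}w\|_{L^2}^2$, so it suffices to show the profile-independent bound $\|w-P_{<j-N}w\|_{L^2}^2\ge c'\|w\|_{L^2}^2$ for $N$ large, i.e. that a definite proportion of the $L^2$ mass of $|\Delta_j v|^{p/2}$ sits at frequencies $\gtrsim 2^{j-N}$. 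The delicate point, which I expect to be the crux, is that the low-frequency envelope of $|\Delta_j v|^{p/2}$ may itself carry most of the mass, so one cannot control all of it; what must be quantified is that the oscillation of $\Delta_j v$ at scale $2^{-j}$ forced by the spectral support in $|\xi|\sim 2^j$ leaves an oscillatory remainder of uniformly positive relative $L^2$ size. I would establish this by rescaling to $2^j=1$ and a compactness/contradiction argument, or by following the explicit kernel and commutator estimates in Wu \cite{wu4}.

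Finally, \eqref{eq2.2.36} follows by applying \eqref{eq2.2.37} to $u=\Delta_j v$ and inserting the lower bound just obtained:
\[ \int_{\bR^2}(\Lambda^\gamma\Delta_j v)|\Delta_j v|^{p-2}\Delta_j v\ge c\,\|\Lambda^{\gamma/2}(|\Delta_j v|^{p/2})\|_{L^2}^2\ge c\,\lambda^p\,2^{\gamma j}\|\Delta_j v\|_{L^p}^p. \]
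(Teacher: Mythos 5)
First, a point of comparison: the paper does not prove Lemma \ref{lowerbd} at all --- it is quoted from \cite{wu4}, \cite{ju4} and \cite{miao} --- so your proposal has to be measured against the arguments in those references. Three of your four steps are correct and are essentially the arguments of the literature. The proof of \eqref{eq2.2.37} by symmetrizing the singular-integral representation of $\Lambda^\gamma$ and invoking the pointwise inequality $(|a|^{p-2}a-|b|^{p-2}b)(a-b)\ge \frac{4(p-1)}{p^2}\,(|a|^{p/2}-|b|^{p/2})^2$ is exactly the C\'ordoba--C\'ordoba/Ju argument; note your Cauchy--Schwarz verification covers $a,b$ of the same sign, and the mixed-sign case needs the separate (easy) observation that the left side then dominates $|a|^p+|b|^p$, which in turn dominates $(|a|^{p/2}-|b|^{p/2})^2$. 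The upper bound in \eqref{eq2.5.43} via H\"older, Lemma \ref{bern} and \eqref{eq2.2.37} is fine, and, granted the lower bound in \eqref{eq2.5.43}, your derivation of \eqref{eq2.2.36} is correct.

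The genuine gap is the lower bound in \eqref{eq2.5.43}, which is precisely the ``new Bernstein inequality'' of \cite{miao} and is the entire nonclassical content of the lemma. Your reduction is right: with $w=|\Delta_j v|^{p/2}$ it suffices to show that a fixed proportion of $\|w\|_{L^2}^2$ sits at frequencies $\gtrsim 2^{j}$. But neither of your two suggested routes proves this. A soft compactness/contradiction argument after rescaling to $j=0$ cannot work as stated: the constraint class $\{f \text{ real},\ \operatorname{supp}\hat f\subset\{1/2\le|\xi|\le 2\},\ \|f\|_{L^p}=1\}$ is invariant under translations and contains spreading wave packets such as $f_n(x)=n^{-2/p}\phi(x/n)\cos x_1$, for which $f_n\to 0$ locally uniformly; a putative extremizing sequence of this form has zero as its only limit profile (rescaling, the one symmetry that would renormalize it, is unavailable because it destroys the annulus localization), so extracting a weak limit yields $0=0$ and no contradiction. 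Moreover $\|\Lambda^{\gamma/2}w\|_{L^2}^2$ is essentially additive over widely separated packets, so there is no concentration to exploit; the inequality is ``local'' in nature, which is exactly why \cite{wu4} and \cite{miao} prove it by explicit, quantitative harmonic analysis rather than by compactness. Your second route, ``follow the kernel and commutator estimates in \cite{wu4}'', is a citation rather than a proof --- which, to be fair, is also all the paper itself offers, but it means your proposal does not close the one step that distinguishes this lemma from classical facts.
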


Next we recall the commutator estimate that shall be used throughout the paper.
\begin{lemma}
                        \label{comm1}
Let $d\geq 1$ be an integer, $p,q\in [1,\infty]$,
$\frac 1 r=\frac 1 {r_1}+\frac 1 {r_2}\leq 1$, $\rho_1<1$, $\rho_2<1$ and $u$ be a
divergence free vector field. Assume in addition that
$$
\rho_1+\rho_2+d\min(1,\frac 2 p)>0,\quad \rho_1+\frac d p>0
$$
Then for any $j\in \bZ$ we have
\begin{align}
            \label{eq2.4.08}
&\|[u,\Delta_j]\cdot \nabla v\|_{L_t^r(L^p(\bR^d))}\nonumber\\
&\,\, \leq Cc_j 2^{-j(\frac d p+\rho_1+\rho_2-1)}\|\nabla
u\|_{\tilde L_t^{r_1}(\dot B_{p,q}^{\frac d p+\rho_1-1}(\bR^d))}
\|\nabla v\|_{\tilde L_t^{r_2}(\dot B_{p,q}^{\frac d p+\rho_2-1}(\bR^d))},
\end{align}
where $C$ is a positive constant independent of $j$ and $\{c_j\}\in l^q$ satisfying $\|c_j\|_{l^q}\leq 1$. Here
$$
[u,\Delta_j]\cdot \nabla v=u\cdot \Delta_j(\nabla v)-\Delta_j(u\cdot \nabla v).
$$
\end{lemma}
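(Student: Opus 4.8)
The plan is to prove \eqref{eq2.4.08} by Bony's paraproduct decomposition, first establishing the spatial $L^p$ bound for a.e.\ fixed $t$ and afterwards recovering the mixed time-space norm by H\"older's inequality in time. Write $S_{k-1}=\sum_{i\le k-2}\Delta_i$ for the low-frequency cut-off and $\tilde\Delta_k=\Delta_{k-1}+\Delta_k+\Delta_{k+1}$. Decomposing both $\Delta_j(u\cdot\nabla v)$ and $u\cdot\Delta_j\nabla v$ following Bony and matching the low--high paraproducts, I would split the commutator into three pieces: a genuine low--high commutator $\sum_k[\Delta_j,S_{k-1}u\cdot\nabla]\Delta_k v$, a high--low paraproduct contribution, and a high--high remainder. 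In the latter two the commutator structure is inessential, and I would first use $\nabla\cdot u=0$ to write $u\cdot\nabla v=\nabla\cdot(uv)$, so that the surviving derivative acts at frequency $2^j$ rather than at the higher summation frequency; this is what produces the stated index condition rather than one shifted by a full derivative.

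For the low--high commutator only the indices $k$ with $|k-j|\le N_0$ contribute, by the frequency supports, and each summand has the kernel representation
$$[\Delta_j,S_{k-1}u\cdot\nabla]\Delta_k v(x)=\int_{\bR^d}\phi_j(x-y)\,\big(S_{k-1}u(y)-S_{k-1}u(x)\big)\cdot\nabla\Delta_k v(y)\,dy.$$
A first-order Taylor expansion of $S_{k-1}u$ together with Young's inequality, using that $|x|\phi_j(x)$ has $L^1$ norm $\sim 2^{-j}$, gives
$$\big\|[\Delta_j,S_{k-1}u\cdot\nabla]\Delta_k v\big\|_{L^p}\lesssim 2^{-j}\,\|\nabla S_{k-1}u\|_{L^\infty}\,\|\nabla\Delta_k v\|_{L^p}.$$
Bernstein's inequality \eqref{eq5.48} bounds $\|\nabla S_{k-1}u\|_{L^\infty}\lesssim\sum_{i\le k-2}2^{id/p}\|\Delta_i\nabla u\|_{L^p}$; since $\rho_1<1$ this series is dominated by its top frequency and is $\lesssim 2^{k(1-\rho_1)}\|\nabla u\|_{\dot B^{d/p+\rho_1-1}_{p,q}}$ up to a summable sequence. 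Inserting $\|\nabla\Delta_k v\|_{L^p}\lesssim c_k\,2^{-k(d/p+\rho_2-1)}\|\nabla v\|_{\dot B^{d/p+\rho_2-1}_{p,q}}$ and using $2^k\sim 2^j$, the three powers of two combine as $-1+(1-\rho_1)-(d/p+\rho_2-1)=-(d/p+\rho_1+\rho_2-1)$, which is exactly the exponent in \eqref{eq2.4.08}.

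The high--low and high--high pieces I would estimate directly, without exploiting cancellation. For the high--high remainder, after writing it in divergence form, the output is localized at frequency $2^j$ while the two high-frequency factors are summed over $k\gtrsim j$; bounding the product by H\"older and Bernstein (in $L^{p/2}$ when $p\ge2$, and in $L^p$ after one Bernstein step when $p\le2$, which is the origin of the factor $\min(1,2/p)$) and using $\|\Delta_k u\|_{L^p}\lesssim 2^{-k}\|\Delta_k\nabla u\|_{L^p}$, the resulting geometric series converges precisely when $\rho_1+\rho_2+d\min(1,2/p)>0$ and again sums to the claimed power of $2^{-j}$. For the high--low piece the low-frequency factor $S_{k-1}\nabla v$ is placed in $L^\infty$, which is finite because $\rho_2<1$; the condition $\rho_1+d/p>0$ enters to guarantee that the low-frequency dyadic sums involving $u$ itself (recovered from $\nabla u$) converge, so that the paraproducts are well defined.

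Finally, the mixed time-space bound follows from the pointwise-in-$t$ spatial estimates by H\"older in time with $1/r=1/r_1+1/r_2$, together with Minkowski's inequality to interchange the $l^q$ summation in $j$ with the time integration; the products of the $l^q$ sequences produced above are absorbed into a single $\{c_j\}$ with $\|c_j\|_{l^q}\le 1$ by discrete Young and H\"older inequalities. I expect the main obstacle to be the high--high remainder term: there the smoothing gain of the commutator is absent, one must track the frequency supports and the divergence-free cancellation carefully to reach the condition $\rho_1+\rho_2+d\min(1,2/p)>0$, and the resulting sequence bookkeeping must be made consistent with that of the other two terms so that everything collapses into one admissible $\{c_j\}$.
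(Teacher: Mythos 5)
The paper does not prove this lemma at all --- its ``proof'' is the citation ``See \cite{miao} and \cite{danchin}'' --- and your outline reproduces precisely the standard argument from those references: Bony decomposition of the commutator into the genuine low--high commutator, the high--low paraproducts, and the high--high remainder; the kernel/Taylor estimate with $\||x|\phi_j\|_{L^1}\sim 2^{-j}$ for the first piece; the divergence form plus Bernstein (in $L^{p/2}$ for $p\ge 2$, in $L^p$ for $p\le 2$, whence $\min(1,2/p)$) for the remainder; and the conditions $\rho_1<1$, $\rho_2<1$, $\rho_1+d/p>0$, $\rho_1+\rho_2+d\min(1,2/p)>0$ entering exactly where you say they do, with the correct exponent bookkeeping. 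One caution on the final step: to end up with the Chemin norms $\widetilde{L}^{r_1}_t\dot B^{\cdot}_{p,q}$, $\widetilde{L}^{r_2}_t\dot B^{\cdot}_{p,q}$ rather than $L^{r_1}_t\dot B^{\cdot}_{p,q}$, $L^{r_2}_t\dot B^{\cdot}_{p,q}$ (which are not comparable in general), H\"older in time must be applied block-by-block, i.e.\ to each product $\|\Delta_i\nabla u\|_{L^{r_1}_tL^p}\,\|\Delta_k\nabla v\|_{L^{r_2}_tL^p}$ \emph{before} the dyadic sums are performed --- not after a fixed-time estimate already summed into Besov norms --- so your Minkowski/discrete-Young step should be organized in that order.
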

\begin{proof}
See \cite{miao} and \cite{danchin}.
\end{proof}

Also we state the following result about a product of two functions in Besov spaces. For
a proof, see, for example, \cite{miao}.

\begin{lemma} \label{prod}
Let $s > -\frac{d}{p} - 1$, $s < s_1 < \frac{d}{p}$, $2 \leq p \leq \infty$,
$1 \leq q \leq \infty$,
$\frac{1}{r} = \frac{1}{r_1} + \frac{1}{r_2} \leq 1$ and $u$ be a divergence free vector field.
Then
$$
\|u \cdot \nabla v\|_{\widetilde{L}^r_t (\dot{B}^s_{p,q})}\lesssim
\|u\|_{\widetilde{L}^{r_1}_t (\dot{B}^{s_1}_{p,q})} \; \|\nabla
v\|_{\widetilde{L}^{r_2}_t (\dot{B}^{s+ \frac{d}{p} - s_1}_{p,q})}.
$$
If $s_1 = \frac{d}{p}$ or $s_1 = s$, then $q$ has to be taken to be $1$.
\end{lemma}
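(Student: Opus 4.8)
The plan is to prove the estimate by Bony's paraproduct decomposition, applied component-wise to $u\cdot\nabla v=\sum_i u_i\,\partial_i v$, together with a Hölder splitting in the time variable adapted to the Chemin--Lerner structure of the $\widetilde L^r$ norms. Writing $S_{j-1}=\sum_{k\le j-2}\Delta_k$ for the low-frequency cut-off, I decompose each product as
\begin{equation*}
u_i\,\partial_i v=\sum_j S_{j-1}u_i\,\Delta_j\partial_i v+\sum_j S_{j-1}(\partial_i v)\,\Delta_j u_i+\sum_j\sum_{|j-k|\le1}\Delta_j u_i\,\Delta_k\partial_i v,
\end{equation*}
that is, into a low--high paraproduct $T_{u_i}\partial_i v$, a high--low paraproduct $T_{\partial_i v}u_i$, and a remainder $R(u_i,\partial_i v)$. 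For each piece I estimate the $L^p$ norm of its $\Delta_k$ projection at a fixed time using Bernstein's inequality (Lemma \ref{bern}) and the embedding $\dot B^{a}_{p,q}\hookrightarrow\dot B^{a-d/p}_{\infty,q}$, then take the $L^r_t$ norm and sum in $\ell^q$. The relation $1/r=1/r_1+1/r_2$ lets me split the time integral frequency-by-frequency by Hölder, which is precisely what the $\widetilde L^r$ norm is built to accommodate.

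The two paraproduct terms are the routine part. Set $\sigma:=s+d/p-s_1$, the regularity of $\nabla v$. For the low--high term, $S_{k-1}u_i$ is controlled in $L^\infty$ by $\|u\|_{\dot B^{s_1}_{p,q}}$ via the embedding above, which requires $s_1-d/p<0$, i.e.\ $s_1<d/p$; combined with $\|\Delta_k\nabla v\|_{L^p}$ it lands the term in $\dot B^{(s_1-d/p)+\sigma}_{p,q}=\dot B^{s}_{p,q}$. For the high--low term the roles are reversed: $S_{k-1}(\partial_i v)$ is placed in $L^\infty$, which requires $\sigma-d/p<0$, i.e.\ $s<s_1$, and the output again lands in $\dot B^{s}_{p,q}$. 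The borderline cases $s_1=d/p$ and $s=s_1$ are exactly where one of these strict inequalities degenerates, so the geometric series in the low-frequency sum no longer converges and must be replaced by a genuine $\ell^1$ sum, forcing $q=1$ as in the statement.

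The main obstacle is the remainder term $\sum_i R(u_i,\partial_i v)$. Treated naively, the remainder bilinear estimate $R:\dot B^{s_1}_{p,q}\times\dot B^{\sigma}_{p,q}\to\dot B^{s}_{p,q}$ converges only under $s_1+\sigma>0$, i.e.\ $s>-d/p$, which is strictly stronger than the hypothesis $s>-d/p-1$. The resolution is to exploit the divergence-free condition to gain one derivative. Using the commutation identity $R(f,\partial_i g)=\partial_i R(f,g)-R(\partial_i f,g)$ together with $\sum_i R(\partial_i u_i,v)=R(\operatorname{div}u,v)=0$, I rewrite
\begin{equation*}
\sum_i R(u_i,\partial_i v)=\sum_i\partial_i R(u_i,v)=\operatorname{div}R(u,v),
\end{equation*}
so the derivative now sits outside the bilinear remainder. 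Since $\nabla v\in\dot B^{\sigma}_{p,q}$ means $v\in\dot B^{\sigma+1}_{p,q}$, the convergence condition for $R(u,v)$ becomes $s_1+(\sigma+1)>0$, i.e.\ $s>-d/p-1$, exactly the hypothesis. Here the assumption $p\ge2$ guarantees $p/2\ge1$, so the frequency-diagonal products $\Delta_j u_i\,\Delta_k v$ lie in $L^{p/2}$, and the Besov embedding $\dot B^{\cdot}_{p/2,q}\hookrightarrow\dot B^{\cdot-d/p}_{p,q}$ returns them to the $L^p$ scale; the term $R(u,v)$ then lands in $\dot B^{s+1}_{p,q}$, and applying $\operatorname{div}$ recovers $\dot B^{s}_{p,q}$.

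Combining the three pieces and the Hölder-in-time splitting yields the claimed bound, with the $q=1$ restriction appearing only at the two borderline exponents. The genuinely delicate point is the divergence-form rewriting of the remainder, which is what relaxes the positivity threshold by a full derivative from $s>-d/p$ to $s>-d/p-1$; everything else reduces to bookkeeping of the frequency localizations and the time exponents.
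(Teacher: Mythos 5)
The paper does not prove this lemma itself; it defers to Chen--Miao--Zhang \cite{miao}, and your argument is essentially the proof given there (see also \cite{danchin}): Bony decomposition, the two paraproduct estimates under the strict inequalities $s_1 < \frac{d}{p}$ and $s < s_1$ (whose failure at the endpoints is exactly what forces $q=1$), and the divergence-form rewriting of the remainder, $\sum_i R(u_i,\partial_i v)=\operatorname{div}R(u,v)$, which relaxes the threshold from $s>-\frac{d}{p}$ to $s>-\frac{d}{p}-1$. Your proposal is correct and matches that approach, including the frequency-by-frequency H\"older splitting in time that the Chemin--Lerner norms are designed for.
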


\mysection{Formulation of results}\label{main}

In this section we formulate existence and uniqueness results
that shall be used in the proof of our main regularity result.
Also we formulate the main regularity result.

First we state the local well-posedness result for
\eqref{qgeq1}.

\begin{proposition} \label{prop-lwp}
Let $\gamma\in (0,1]$, $p\in [2,\infty)$, $q\in [1,\infty]$ and
$r_0\in[2,\infty)$. Denote by $\alpha = \frac{2}{p}+1-\gamma +
\frac{\gamma}{r_0}$. Assume $\theta_0\in B^{\alpha}_{p,q}(\bR^2)$.
Then there exists $T\geq c\|\theta_0\|_{{\dot
B^{\alpha}_{p,q}}}^{-r_0}$ for some constant $c>0$ such that the
initial value problem for \eqref{qgeq1} has a unique weak solution
\begin{equation*}
\theta(t,x)\in
\widetilde{L}^2((0,T);B^{\alpha+\frac \gamma 2}_{p,q})\cap
\widetilde{L}^\infty((0,T);B^{\alpha}_{p,q}).
\end{equation*}
For any $r\in [2,\infty]$,
\begin{equation} \label{cons-apriori}
\|\theta\|_{\widetilde{L}^r_tB^{\alpha+\frac{\gamma}{r}}_{p,q}((0,T) \times \bR^2)}
\leq C\|\theta_0\|_{{B^{\alpha}_{p,q}}}
\end{equation}
with a positive constant $C$ independent of $r$, and $\theta$ is
smooth in $(0,T)\times \bR^2$. Moreover, if $q<\infty$, we also have
$$
\theta(t,x)\in C([0,T); B^{\alpha}_{p,q}).$$
\end{proposition}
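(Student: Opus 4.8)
The plan is to recast \eqref{qgeq1} in its mild form and solve it by an iteration scheme in the subcritical Chemin space $X_T:=\widetilde L^2((0,T);B^{\alpha+\gamma/2}_{p,q})\cap\widetilde L^\infty((0,T);B^{\alpha}_{p,q})$, exploiting the dissipation of $\Lambda^\gamma$ block by block. I would set $\theta^{(0)}=e^{-t\Lambda^\gamma}\theta_0$ and, given $\theta^{(n)}$ with induced velocity $u^{(n)}=(-{\mathcal R}_2\theta^{(n)},{\mathcal R}_1\theta^{(n)})$, define $\theta^{(n+1)}$ as the solution of the linear problem $\partial_t\theta^{(n+1)}+\Lambda^\gamma\theta^{(n+1)}=-u^{(n)}\cdot\nabla\theta^{(n)}$ with datum $\theta_0$. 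The engine of the argument is a dyadic $L^p$ estimate: applying $\Delta_j$, testing against $|\Delta_j\theta|^{p-2}\Delta_j\theta$ and integrating, the transport contribution vanishes because $u$ is divergence free, the dissipative contribution is bounded below by $c2^{\gamma j}\|\Delta_j\theta\|_{L^p}^p$ through \eqref{eq2.2.36} of Lemma \ref{lowerbd}, and the remainder is the commutator $[u,\Delta_j]\cdot\nabla\theta$. This gives $\frac{d}{dt}\|\Delta_j\theta\|_{L^p}+c2^{\gamma j}\|\Delta_j\theta\|_{L^p}\le\|[u,\Delta_j]\cdot\nabla\theta\|_{L^p}$, which integrates to a Duhamel bound carrying the factor $e^{-c(t-\tau)2^{\gamma j}}$.

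Next I would upgrade this to the a priori bound \eqref{cons-apriori}. Taking the $L^r$ norm in time and applying Young's inequality to the convolution against $e^{-c\,\cdot\,2^{\gamma j}}$ trades the dissipation for a gain of $\gamma/r$ derivatives; multiplying by $2^{j(\alpha+\gamma/r)}$ and summing in $\ell^q$ gives, uniformly in $r\in[2,\infty]$,
\[
\|\theta\|_{\widetilde L^r_t\dot B^{\alpha+\gamma/r}_{p,q}}\lesssim\|\theta_0\|_{\dot B^{\alpha}_{p,q}}+\big\|[u,\Delta_j]\cdot\nabla\theta\big\|_{\widetilde L^1_t\dot B^{\alpha}_{p,q}}.
\]
The nonlinear term is controlled by the commutator estimate of Lemma \ref{comm1}, applied with time exponents satisfying $1/r_1+1/r_2=1-1/r_0$ and with $\rho_1,\rho_2<1$ chosen so that $2/p+\rho_i=\alpha+\gamma/r_i$; one checks that then the $j$-dependent weight is exactly scale invariant, both factors take the norm-controlled form $\|\theta\|_{\widetilde L^{r_i}_t\dot B^{\alpha+\gamma/r_i}_{p,q}}$ (using boundedness of the Riesz transforms on Besov spaces to pass from $\nabla u$ to $\nabla\theta$), and the commutator lands in $\widetilde L^{r_0'}_t\dot B^{\alpha}_{p,q}$. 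A Hölder inequality in time on $(0,T)$ then embeds this into $\widetilde L^1_t\dot B^\alpha_{p,q}$ at the cost of a factor $T^{1/r_0}$, so that altogether
\[
\|\theta\|_{\widetilde L^r_t\dot B^{\alpha+\gamma/r}_{p,q}}\lesssim\|\theta_0\|_{\dot B^{\alpha}_{p,q}}+T^{1/r_0}\|\theta\|_{X_T}^2.
\]
The difference $\theta^{(n+1)}-\theta^{(n)}$ obeys an estimate of the same shape, so once $T^{1/r_0}\|\theta_0\|_{\dot B^\alpha_{p,q}}$ is small the map is a contraction on a suitable ball of $X_T$; this both produces the solution and forces $T\gtrsim\|\theta_0\|_{\dot B^\alpha_{p,q}}^{-r_0}$, consistent with the scaling $\theta_\lambda=\lambda^{\gamma-1}\theta(\lambda x,\lambda^\gamma t)$. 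Passing to the limit yields a weak solution satisfying \eqref{cons-apriori}, and uniqueness is supplied by Proposition \ref{prop-unique}.

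Finally, the gain of $\gamma/r$ derivatives in \eqref{cons-apriori} can be bootstrapped: starting from $\theta\in X_T$ one places $\theta$ in $\dot B^{s}_{p,q}$ of arbitrarily high order for positive times, which together with standard parabolic regularity yields smoothness of $\theta$ on $(0,T)\times\bR^2$; when $q<\infty$ the continuity $\theta\in C([0,T);B^\alpha_{p,q})$ follows from the mild representation and the summability of the high-frequency tail in the $B^\alpha_{p,q}$ norm. I expect the real difficulty to lie in the index bookkeeping for Lemma \ref{comm1} (and, if used in its place, Lemma \ref{prod}): one must simultaneously enforce the admissibility inequalities $\rho_1+\rho_2+2\min(1,2/p)>0$ and $\rho_1+2/p>0$, keep $\rho_1,\rho_2<1$, realize both factors inside the norm-controlled family while matching the target regularity $\alpha$ of \eqref{alp}, and extract the power $T^{1/r_0}$ that closes the fixed point — all uniformly as $p$, $q$, and $r_0$ range over their admissible sets.
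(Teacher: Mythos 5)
Your a priori estimate is, up to a harmless re-balancing of exponents, the paper's own: the dyadic $L^p$ energy inequality via \eqref{eq2.2.36}, Young's inequality in time against $e^{-\lambda 2^{\gamma j}t}$, and Lemma \ref{comm1} tuned so that the dyadic weight collapses to $2^{-j\alpha}$ and a factor $T^{1/r_0}$ survives (the paper takes $r_1=r_2=2r$ and puts the exponential in $L^1_t$; you take $1/r_1+1/r_2=1-1/r_0$ and put it in $L^r_t$; both bookkeepings close, and your admissibility checks for $\rho_i<1$ are correct). The genuine gap is in the construction of the solution. You define the iterates \emph{explicitly}, $\partial_t\theta^{(n+1)}+\Lambda^\gamma\theta^{(n+1)}=-u^{(n)}\cdot\nabla\theta^{(n)}$, so the nonlinearity enters as a pure source and the equation for $\theta^{(n+1)}$ contains no transport term; but then the ``engine'' you invoke --- vanishing of the transport contribution after testing against $|\Delta_j\theta^{(n+1)}|^{p-2}\Delta_j\theta^{(n+1)}$, with a commutator remainder --- simply does not arise for your scheme. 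What your scheme forces you to estimate is $\Delta_j\bigl(u^{(n)}\cdot\nabla\theta^{(n)}\bigr)$ outright, and this cannot be closed in $X_T$: the paraproduct with $u^{(n)}$ at low frequency and $\nabla\theta^{(n)}$ at high frequency costs one full derivative on $\theta^{(n)}$, while the Duhamel smoothing refunds only $\gamma\le 1$ of it. Concretely, if you try to use Lemma \ref{prod} with the norms your scheme controls, matching the high-frequency factor forces $s_1\ge \frac{2}{p}+1-\gamma+\frac{\gamma}{r_1}$, which is incompatible with the lemma's hypothesis $s_1<\frac{2}{p}$ for every $\gamma\in(0,1]$. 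The commutator cancellation is available only when the transported function is the one being estimated, i.e.\ for the paper's semi-implicit scheme \eqref{appro1}, $\partial_t\theta^{k+1}+u^k\cdot\nabla\theta^{k+1}+\Lambda^\gamma\theta^{k+1}=0$; with that replacement your estimates do go through and yield \eqref{eq11.59} and the uniform bound \eqref{eq12.46}.

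Second, your passage to the limit by a contraction in ``a suitable ball of $X_T$'' fails for the same reason in a different guise: the equation for $\theta^{(n+1)}-\theta^{(n)}$ contains $(u^{(n)}-u^{(n-1)})\cdot\nabla\theta^{(n)}$, which loses one derivative relative to the norms of $X_T$, so differences do \emph{not} obey ``an estimate of the same shape''; this loss is exactly why the paper runs its uniqueness argument (Proposition \ref{prop-unique}) at the lower regularity $\dot B^{2/p-\eta}_{p,q}$ rather than at level $\alpha$. The paper's route is instead: uniform bounds \eqref{eq12.46} for the semi-implicit iterates, a bound on $\theta^k_t$ in $L^\infty(B^{\gamma/r_0-\gamma}_{p,q})$, Aubin--Lions compactness to extract a limit solving \eqref{qgeq1} in the sense of distributions, and then uniqueness as a separate statement. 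Alternatively you could contract in a norm one derivative weaker and recover $X_T$-membership from the uniform bounds; either repair is standard, but as written the fixed-point step is a gap. (Your final claims --- smoothness by bootstrap and $C([0,T);B^{\alpha}_{p,q})$ for $q<\infty$ --- are only sketched, but the paper likewise only sketches or cites these, so I leave them aside.)
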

\begin{remark}
From the proof, it is clear that if $r_0>2$ then the unique solution
$\theta$ is actually in
$$
\widetilde{L}^1((0,T);B^{\alpha+\gamma}_{p,q})\cap
\widetilde{L}^\infty((0,T);B^{\alpha}_{p,q}).
$$ Moreover, for any $r\in [1,\infty]$ estimate \eqref{cons-apriori}
holds. However, we will not use this in our main theorem.
\end{remark}

An analogous local well-posedness result in the critical space
$B^{\frac{2}{p} + 1 - \gamma}_{p,q}$ was established in \cite{miao}
(see also \cite{miura, ju4} for local well-posedness results in
Sobolev spaces). However, we remark that with $\theta_0$ in the
critical space the time of existence $T$ depends on the profile of
$\theta_0$ instead of the norm.

The next proposition is about the uniqueness of weak solutions
in mixed time-space Besov spaces.

\begin{proposition}\label{prop-unique}
Let $\gamma\in (0,1]$, $p\in [2,\infty)$,  $T\in (0,\infty)$  and
$r_0\in[2,\infty)$. Denote by $\alpha = \frac{2}{p}+1-\gamma +
\frac{\gamma}{r_0}$.
\begin{enumerate}
\item[(a)] Let $q\in [1,\infty)$.
If $\theta, \theta' \in \widetilde{L}^{r_0}_tB^{\alpha}_{p,q}((0,T) \times \bR^2)$
are two weak solutions of \eqref{qgeq1} with the same initial data, then $\theta =\theta'$ in $[0,T)\times\bR^2$.

\item[(b)] Let $q=\infty$.
If $\theta, \theta' \in {L}^{r_0}_tB^{\alpha}_{p,q}((0,T) \times \bR^2)$
are two weak solutions of \eqref{qgeq1} with the same initial data, then
$\theta =\theta'$ in $[0,T)\times\bR^2$.
\end{enumerate}
\end{proposition}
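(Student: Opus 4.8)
The plan is to set $w=\theta-\theta'$, which solves a problem that is \emph{linear} in $w$ with vanishing initial data, and to show $w\equiv0$ first on a short time interval and then propagate this up to $T$ by iteration. Writing $u=(-\mathcal R_2\theta,\mathcal R_1\theta)$, $u'=(-\mathcal R_2\theta',\mathcal R_1\theta')$ and $v=u-u'=(-\mathcal R_2 w,\mathcal R_1 w)$, subtracting the two copies of \eqref{qgeq1} and using $u\cdot\nabla\theta-u'\cdot\nabla\theta'=u\cdot\nabla w+v\cdot\nabla\theta'$ gives
\begin{equation*}
\partial_t w+u\cdot\nabla w+\Lambda^\gamma w=-\,v\cdot\nabla\theta',\qquad w(0,\cdot)=0 .
\end{equation*}
Both $u$ and $v$ are divergence free, which I exploit repeatedly; since the $\mathcal R_i$ are order-zero, $v$ carries the same Besov regularity as $w$. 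The natural auxiliary space is one full derivative below the critical index: I would estimate $w$ in $X:=\widetilde L^{r_0}_t\dot B^{0}_{p,q}$, feeding the right-hand side into the Duhamel formula in $\widetilde L^{r_0/2}_t\dot B^{-\gamma(1-1/r_0)}_{p,q}$.

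Next I localize in frequency by applying $\Delta_j$, which produces $[\,\Delta_j,u\cdot\nabla\,]w$ and $\Delta_j(v\cdot\nabla\theta')$ as the two forcing terms. Pairing with $|\Delta_j w|^{p-2}\Delta_j w$ and integrating over $\bR^2$, the transport term drops by the divergence-free condition while the dissipation is bounded below through \eqref{eq2.2.36} by $c\,2^{\gamma j}\|\Delta_j w\|_{L^p}^p$, yielding the blockwise inequality
\begin{equation*}
\frac{d}{dt}\|\Delta_j w\|_{L^p}+c\,2^{\gamma j}\|\Delta_j w\|_{L^p}
\le \big\|[\,\Delta_j,u\cdot\nabla\,]w\big\|_{L^p}+\|\Delta_j(v\cdot\nabla\theta')\|_{L^p}.
\end{equation*}
Integrating against $e^{-c2^{\gamma j}(t-s)}$ (where $w(0)=0$ is essential), then applying Young's convolution inequality in time and summing in $\ell^q(\bZ)$, the kernel bound $\|e^{-c2^{\gamma j}\cdot}\|_{L^\rho_t}\sim 2^{-\gamma j(1-1/r_0)}$ with $1/\rho=1-1/r_0$ converts the $2^{\gamma j}$ damping into a gain of $\gamma(1-1/r_0)$ derivatives, so $\|w\|_{X}$ is controlled by the $\widetilde L^{r_0/2}_t\dot B^{-\gamma(1-1/r_0)}_{p,q}$ norm of the forcing.

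It then remains to bound the two sources at this regularity. The commutator is handled by Lemma \ref{comm1} with $r_1=r_2=r_0$, $\rho_1=\alpha-\frac2p=1-\gamma+\frac\gamma{r_0}$ and $\rho_2=-\frac2p$; one checks these satisfy $\rho_1,\rho_2<1$, $\rho_1+\rho_2+2\min(1,\frac2p)=1-\gamma+\frac\gamma{r_0}+\frac2p>0$ and $\rho_1+\frac2p>0$, and that the output exponent $-(\frac2p+\rho_1+\rho_2-1)=\gamma(1-\tfrac1{r_0})$ places the commutator exactly in $\widetilde L^{r_0/2}_t\dot B^{-\gamma(1-1/r_0)}_{p,q}$, with the factor $\|\nabla u\|_{\widetilde L^{r_0}_t\dot B^{\alpha-1}_{p,q}}\sim\|\theta\|_{\widetilde L^{r_0}_t B^{\alpha}_{p,q}}$ times a norm of $\nabla w$ at level $0$. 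The product $v\cdot\nabla\theta'$ is handled by Lemma \ref{prod} with $s=-\gamma(1-\tfrac1{r_0})$ and $s_1=0$; the criticality of $\alpha$ makes $s<s_1<\frac2p$ and $s>-\frac2p-1$ hold and makes the indices regenerate $\|w\|_{X}\,\|\theta'\|_{\widetilde L^{r_0}_t B^{\alpha}_{p,q}}$. Combining, I obtain on any subinterval $(0,T')$ an estimate of the schematic form
\begin{equation*}
\|w\|_{X(0,T')}\le C\big(\|\theta\|_{\widetilde L^{r_0}_t B^{\alpha}_{p,q}(0,T')}+\|\theta'\|_{\widetilde L^{r_0}_t B^{\alpha}_{p,q}(0,T')}\big)\,\|w\|_{X(0,T')}.
\end{equation*}
For part (a), where $q<\infty$, the Chemin norm of $\theta,\theta'$ over $(0,T')$ tends to $0$ as $T'\to0$ by absolute continuity in $j$ and in time, so the prefactor is $<1$ for $T'$ small and hence $w\equiv0$ there; shifting the base time and repeating covers $[0,T)$.

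For part (b), $q=\infty$, the tilde norm loses this continuity in time, which is precisely why the statement switches to the ordinary Bochner space $L^{r_0}_t B^{\alpha}_{p,\infty}$. Since $L^{r_0}_t B^{\alpha}_{p,\infty}\hookrightarrow\widetilde L^{r_0}_t B^{\alpha}_{p,\infty}$ (Minkowski, as $r_0\le\infty$) and the ordinary time integral is absolutely continuous, the smallness of the prefactor on short intervals is recovered and the scheme above applies verbatim. I expect the genuine obstacles to be twofold: first, justifying the blockwise energy manipulation and the use of \eqref{eq2.2.36} for merely \emph{weak} solutions (which are not yet known to coincide with the smooth one built in Proposition \ref{prop-lwp}), which I would handle by mollification and a density/approximation argument before taking limits; and second, the $q=\infty$ endpoint, where one must carefully avoid the tilde norm's failure of time-continuity and instead extract the required smallness from the ordinary space. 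The index bookkeeping itself is forced by scaling and, as indicated above, closes cleanly.
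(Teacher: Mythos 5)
Your proposal is correct and follows essentially the same route as the paper's own proof: the same linearized equation for $w=\theta-\theta'$ with vanishing data, the same frequency-localized $L^p$ energy estimate using the lower bound \eqref{eq2.2.36}, Duhamel plus Young's inequality in time with exponent $1/r'=1-1/r_0$, and Lemmas \ref{comm1} and \ref{prod} with parameter choices that coincide exactly with the paper's upon taking its free parameter $\eta$ equal to $2/p$ (so your auxiliary space $\widetilde L^{r_0}_t\dot B^{0}_{p,q}$ is the paper's $\widetilde L^{r_0}_t\dot B^{2/p-\eta}_{p,q}$). Your treatment of the two cases --- smallness of the Chemin norm on short intervals for $q<\infty$, and Minkowski plus absolute continuity of the ordinary Bochner norm for $q=\infty$ --- is also precisely the paper's argument, so nothing further is needed.
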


The following regularity criteria is our main result. Roughly
speaking, it says weak solutions in certain critical time-space
Besov spaces are regular.

\begin{theorem}
        \label{thm1}
Let $\gamma\in (0,1]$, $p\in [2,\infty)$, $T\in (0,\infty)$ and
$r_0\in[2,\infty)$. Denote by $\alpha = \frac{2}{p}+1-\gamma +
\frac{\gamma}{r_0}$. If
$$\theta\in L^{r_0}_t((0,T);B^{\alpha}_{p,\infty}(\bR^2))$$
is a weak solution of \eqref{qgeq1}, then $\theta$ is in
$C^{\infty}((0,T] \times \bR^2)$, and thus it is a classical
solution of \eqref{qgeq1} in the region $(0,T] \times \bR^2$.
\end{theorem}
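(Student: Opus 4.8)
The plan is to run a continuation argument of Giga type, using local well-posedness (Proposition \ref{prop-lwp}) to produce a smooth solution on a short time interval and uniqueness (Proposition \ref{prop-unique}(b)) to glue it to the given weak solution $\theta$. The decisive structural fact is that, with $\alpha$ chosen as in \eqref{alp}, the data space $B^\alpha_{p,\infty}$ is \emph{subcritical}, so that the existence time in Proposition \ref{prop-lwp} is bounded below by $c\|\theta_0\|_{\dot B^\alpha_{p,\infty}}^{-r_0}$, a quantity controlled solely by the norm of the data. It therefore suffices to prove that $\theta$ is $C^\infty$ in a full space-time neighborhood of each fixed time $t^*\in(0,T]$; the union of these neighborhoods is $(0,T]\times\bR^2$, yielding $\theta\in C^\infty((0,T]\times\bR^2)$, and the equation then holds classically.

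\textbf{Selection of a good restarting time.} Fix $t^*\in(0,T]$. Since $\theta\in L^{r_0}_t((0,T);B^\alpha_{p,\infty})$ and $\|\cdot\|_{\dot B^\alpha_{p,\infty}}\lesssim\|\cdot\|_{B^\alpha_{p,\infty}}$ for $\alpha>0$, the function $g(s):=\|\theta(s)\|_{\dot B^\alpha_{p,\infty}}^{r_0}$ lies in $L^1(0,T)$; in particular $\theta(s)\in B^\alpha_{p,\infty}$ for a.e.\ $s$. By absolute continuity of the integral, $\epsilon(\delta):=\int_{t^*-\delta}^{t^*}g(s)\,ds\to0$ as $\delta\to0^+$. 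Fix $\delta\in(0,t^*)$ so small that $\epsilon(\delta)\le c/2$, with $c$ the constant of Proposition \ref{prop-lwp}. A Chebyshev/mean-value argument then produces a positive-measure set of times in $(t^*-\delta,t^*)$ at which $g\le 2\epsilon(\delta)/\delta$, and among these we select a good time $t_0$ at which $\theta(t_0)\in B^\alpha_{p,\infty}$ and which serves as a legitimate initial time for the restarted problem.

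\textbf{Local solve and gluing.} Applying Proposition \ref{prop-lwp} to the initial value problem with datum $\theta(t_0)$ prescribed at time $t_0$ yields a solution $\tilde\theta$, smooth on $(t_0,t_0+\tau)\times\bR^2$, whose existence time obeys
$$\tau\ge c\,\|\theta(t_0)\|_{\dot B^\alpha_{p,\infty}}^{-r_0}=c\,g(t_0)^{-1}\ge \frac{c\,\delta}{2\epsilon(\delta)}\ge\delta.$$
Because $t^*-t_0<\delta\le\tau$, the open interval $(t_0,t_0+\tau)$ contains $t^*$, so $\tilde\theta$ is smooth in a neighborhood of $t^*$. To identify $\tilde\theta$ with $\theta$, note that Proposition \ref{prop-lwp} places $\tilde\theta\in\widetilde L^\infty_t B^\alpha_{p,\infty}$, and since $\widetilde L^\infty_t B^\alpha_{p,\infty}\hookrightarrow L^\infty_t B^\alpha_{p,\infty}\hookrightarrow L^{r_0}_t B^\alpha_{p,\infty}$ on the finite interval, $\tilde\theta$ belongs to the uniqueness class of Proposition \ref{prop-unique}(b). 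As $\theta$ restricted to $[t_0,T)$ is a weak solution in the same class with the same initial datum $\theta(t_0)$, uniqueness forces $\tilde\theta=\theta$ on their common interval; hence $\theta$ is smooth near $t^*$ (for $t^*=T$ the smooth solution $\tilde\theta$ extends $\theta$ past $T$ and so furnishes smoothness up to the endpoint).

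\textbf{Main obstacle.} I expect the principal difficulty to be the ``restart'' step: verifying that the restriction of $\theta$ to $[t_0,T)$ is genuinely a weak solution of \eqref{qgeq1} issuing from $\theta(t_0)$, i.e.\ that $\theta(t_0)$ is attained as the initial trace, so that Proposition \ref{prop-unique}(b) may be applied with $\tilde\theta$. This is exactly why $t_0$ must be chosen within the full-measure set of good times rather than arbitrarily, and it is the point requiring the most care, relying on the mild time-continuity of $\theta$ (with values in a space of distributions) that the weak formulation supplies. By contrast, the quantitative heart of the argument, the bound $\tau\ge\delta$ that forces the local solution to cross $t^*$, is automatic from subcriticality: it is precisely the matching between the exponent $-r_0$ in the existence time and the $L^{r_0}$-integrability in time that turns $\epsilon(\delta)\to0$ into a uniform lower bound on $\tau$, and this is the role of the scaling-critical choice \eqref{alp}.
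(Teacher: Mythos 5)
Your proof is correct, and it rests on the same two pillars as the paper's proof: the subcriticality-induced lifespan bound $T_s\geq c\|\theta(s,\cdot)\|_{\dot B^{\alpha}_{p,\infty}}^{-r_0}$ from Proposition \ref{prop-lwp}, and uniqueness in the class $L^{r_0}_tB^{\alpha}_{p,\infty}$ from Proposition \ref{prop-unique}(b). The difference is the logical organization. The paper argues by contradiction: taking $T$ (without loss of generality) to be the first blowup time, the restart-plus-uniqueness step yields $T-s\geq c\|\theta(s,\cdot)\|_{\dot B^{\alpha}_{p,\infty}}^{-r_0}$ for a.e.\ $s\in(0,T)$, i.e.\ $\|\theta(s,\cdot)\|_{\dot B^{\alpha}_{p,\infty}}^{r_0}\geq c\,(T-s)^{-1}$, and the non-integrability of $(T-s)^{-1}$ near $s=T$ contradicts $\theta\in L^{r_0}_tB^{\alpha}_{p,\infty}$. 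You run the same quantitative fact forwards instead: the Chebyshev selection of $t_0$ with $g(t_0)\leq 2\epsilon(\delta)/\delta$ makes the local lifespan at least $\delta$, so the restarted solution covers the target time $t^*$. These are dual formulations of one and the same estimate; what your version buys is that you never need to make sense of a ``first blowup time'' of a weak solution (a point the paper's ``without loss of generality'' glosses over), and the coverage of the endpoint $t=T$ is explicit, whereas the paper's version is shorter because non-integrability of $(T-s)^{-1}$ finishes the argument in one line with no selection step. Two further details: your embedding chain $\widetilde L^{\infty}_tB^{\alpha}_{p,\infty}=L^{\infty}_tB^{\alpha}_{p,\infty}\hookrightarrow L^{r_0}_tB^{\alpha}_{p,\infty}$ (on a finite time interval) is a valid and simpler alternative to the paper's route $\widetilde L^{r_0}_tB^{\alpha+\gamma/r_0}_{p,\infty}\hookrightarrow\widetilde L^{r_0}_tB^{\alpha}_{p,r_0}\hookrightarrow L^{r_0}_tB^{\alpha}_{p,\infty}$ for placing the local solution in the uniqueness class; and the ``restart'' property you rightly flag as the delicate point (that $\theta$ restricted to $[t_0,T)$ is a weak solution emanating from $\theta(t_0,\cdot)$, which follows by subtracting the weak-formulation identities up to times $t_0$ and $T'$) is needed by both proofs, the paper invoking it silently when it writes $\bar\theta(\cdot,\cdot)=\theta(s+\cdot,\cdot)$.
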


\mysection{Proofs of existence, uniqueness and regularity} \label{proofs}

In this section we present proofs of the above stated results. In
order to prove Proposition \ref{prop-lwp} and Proposition
\ref{prop-unique} we modify accordingly the approach used by Q. Chen
et al \cite{miao}.

\subsection{Proof of Proposition \ref{prop-lwp}}
\subsubsection{A priori estimate}

We apply the operator $\Delta_j$ to the first equation in
\eqref{qgeq1} to obtain
\begin{equation}
                        \label{Eq5.06}
\partial_t \Delta_j \theta +\Delta_j(u \cdot \nabla \theta)
+ \Lambda^\gamma \Delta_j \theta =0,
\end{equation}
which is equivalent to
\begin{equation} \label{en-j}
\partial_t \Delta_j \theta +u \cdot \nabla \Delta_j \theta
+ \Lambda^\gamma \Delta_j \theta
=[u,\Delta_j]\cdot \nabla \theta.
\end{equation}
Now we multiply \eqref{en-j} by $|\Delta_j \theta|^{p-2}\Delta_j \theta$ and integrate in $x$.
Since $u$ is divergence free, the integration by parts yields
$$
\int_{\bR^2} u \cdot \nabla \Delta_j \theta |\Delta_j \theta|^{p-2} \Delta_j \theta
\, dx = 0.
$$
Hence we have
\begin{equation} \label{afterint}
\frac 1 p\frac d {dt}\|\Delta_j \theta\|_{L^p}^p
+ \int_{\bR^2}(\Lambda^{\gamma}\Delta_j \theta)\;|\Delta_j \theta|^{p-2}\Delta_j \theta\,dx
=\int_{\bR^2}[u,\Delta_j]\cdot \nabla \theta|\Delta_j \theta|^{p-2}\Delta_j \theta\,dx.
\end{equation}
Now we use Lemma \ref{lowerbd} to obtain a lower bound on the second term on the left
hand side of \eqref{afterint} and H\"older's inequality to get an upper bound on  the right
hand side of \eqref{afterint} to derive
\begin{equation} \label{afterlowbd}
\frac d {dt}\|\Delta_j \theta\|_{L^p}
+ \lambda 2^{\gamma j} \|\Delta_j \theta\|_{L^p}
\leq C\|[u,\Delta_j]\cdot \nabla \theta\|_{L^p},
\end{equation}
where $\lambda = \lambda(p, \gamma)>0$.
Gronwall's inequality applied on \eqref{afterlowbd} implies
\begin{equation} \label{afterG}
\|\Delta_j \theta\|_{L^p}\leq
e^{-  \lambda 2^{\gamma j}t}\|\Delta_j \theta(0)\|_{L^p}
+C\int_{0}^t  e^{-   \lambda 2^{\gamma j}
(t-s)}\|([u,\Delta_j]\cdot \nabla \theta)(s)\|_{L^p}\,ds.
\end{equation}
Fix $r \in [2, \infty]$. We take the $L^{r}_t$ norm over the
interval of time $(0,T)$ to obtain:
\begin{equation} \label{I's}
\|\Delta_j \theta\|_{L^r_t L^p_x ((0,T) \times {\bR}^2)} \leq I_1 + I_2,
\end{equation}
where
\begin{align*}
I_1 & = \| e^{-\lambda 2^{\gamma j} t}\|_{L^r_t(0,T)} \; \|\Delta_j \theta(0)\|_{L^p_x} \\
I_2 & = \left\| \int_{0}^t  e^{-   \lambda 2^{\gamma j}
(t-s)}\|([u,\Delta_j]\cdot \nabla \theta)(s)\|_{L^p_x}\,ds \right\|_{L^r_t(0,T)}.
\end{align*}

Since
$$
\| e^{-\lambda 2^{\gamma j} t}\|_{L^r_t(0,T)}
\lesssim \left(
\frac{ 1 - e^{-r \lambda 2^{\gamma j}T} }{ r \lambda 2^{\gamma j} }
\right)^{\frac{1}{r}}
\lesssim \lambda^{-\frac{1}{r}} \;2^{-\frac{\gamma}{r}j},
$$
we can bound $I_1$ from above as follows
\begin{equation} \label{I1}
I_1 \lesssim \lambda^{-\frac{1}{r}} \;2^{-\frac{\gamma}{r}j} \;
\|\Delta_j \theta(0)\|_{L^p_x}.
\end{equation}
In order to estimate $I_2$ we use Young's inequality to obtain
\begin{equation} \label{I2Y}
I_2 \lesssim \| e^{-\lambda 2^{\gamma j} t}\|_{L^{1}_t(0,T)} \;
\|[u,\Delta_j]\cdot \nabla \theta\|_{L^{r}_tL^p_x((0,T)\times
\bR^2)}.
\end{equation}
Since
$$ \frac{ 1 - e^{-\lambda 2^{\gamma j}T} }{ \lambda 2^{\gamma j} }
\lesssim 2^{- \gamma j},$$
as well as
$$ \frac{ 1 - e^{-\lambda 2^{\gamma j}T} }{\lambda 2^{\gamma j} }
\lesssim T,$$
we have
$$
\frac{ 1 - e^{-\lambda 2^{\gamma j}T} }{ \lambda 2^{\gamma j} } \leq
2^{-\frac{\gamma}{r_3} j} \; T^{1- \frac{1}{r_3}},$$ where $r_3$ is
arbitrary real number such that $r_3 >1$ and will be chosen later.
Hence \eqref{I2Y} implies
\begin{equation} \label{I2}
I_2 \lesssim 2^{-\frac{\gamma}{r_3} j} \; T^{1 - \frac{1}{r_3}} \;
\|[u,\Delta_j]\cdot \nabla \theta\|_{L^{r}_t L^p_x((0,T)\times
\bR^2)}.
\end{equation}

Now \eqref{I's} combined with \eqref{I1} and \eqref{I2} gives
\begin{align}
&\|\Delta_j \theta\|_{L^r_t L^p_x ((0,T) \times {\bR}^2)}\nonumber\\
&\quad\quad \lesssim \lambda^{-\frac{1}{r}} \;2^{-\frac{\gamma}{r}j}
\; \|\Delta_j \theta(0)\|_{L^p_x} + 2^{-\frac{\gamma}{r_3} j} \;
T^{1- \frac{1}{r_3}} \; \|[u,\Delta_j]\cdot \nabla \theta\|_{L^{r}_t
L^p_x((0,T)\times \bR^2)}.   \label{Lr}
\end{align}
After we multiply \eqref{Lr} by $2^{(\alpha + \frac{\gamma}{r})j}$ and take $l^q(\bZ)$ norm
we infer:
\begin{equation} \label{lq}
\|\theta\|_{ \widetilde{L}^r(\dot{B}_{p,q}^{ \alpha +
\frac{\gamma}{r} }) } \lesssim \lambda^{-\frac{1}{r}}
\|\theta(0)\|_{\dot{B}_{p,q}^{\alpha}} +  T^{1- \frac{1}{r_3}} \; \|
2^{(-\frac{\gamma}{r_3} + \alpha + \frac{\gamma}{r})j}
[u,\Delta_j]\cdot \nabla \theta\|_{L^{r}_t L^p_x((0,T)\times \bR^2)}
\; \|_{l^q},
\end{equation}

In order to estimate $\| 2^{(-\frac{\gamma}{r_3} + \alpha +
\frac{\gamma}{r})j} [u,\Delta_j]\cdot\nabla \theta\|_{L^{r}_t
L^p((0,T)\times \bR^2)} \; \|_{l^q}$ we apply Lemma \ref{comm1} with
$$
v=\theta,\quad d=2,\quad r_1=r_2=2r, \quad \rho_1=\rho_2=1 - \gamma
+ \frac{\gamma}{2r} + \frac{\gamma}{r_0}<1
$$
and use the boundedness of the Riesz transforms to obtain
\begin{align*}
&\|[u,\Delta_j]\cdot\nabla \theta\|_{L^{r}_t L^p((0,T)\times \bR^2)} \\
& \quad \quad \lesssim c_j 2^{-( \alpha + \frac{\gamma}{r_0} +
\frac{\gamma}{r} - \gamma)j} \; \|u \|_{ \widetilde{L}^{r_1}(
\dot{B}^{\alpha + \frac{\gamma}{r_1}}_{p,q} )} \|\theta \|_{
\widetilde{L}^{r_2}(
\dot{B}^{\alpha + \frac{\gamma}{r_2}}_{p,q} )} \\
& \quad \quad \lesssim c_j 2^{-(\alpha + \frac{\gamma}{r_0} +
\frac{\gamma}{r} -\gamma)j} \; \|\theta \|_{ \widetilde{L}^{r_1}(
\dot{B}^{\alpha + \frac{\gamma}{r_1}}_{p,q} )} \|\theta \|_{
\widetilde{L}^{r_2}( \dot{B}^{\alpha + \frac{\gamma}{r_2}}_{p,q} )},
\end{align*}
where $c_j \in l^q$ is such that $\|c_j\|_{l^q} \leq 1$.
Therefore
\begin{align}
& 2^{(-\frac{\gamma}{r_3} + \alpha + \frac{\gamma}{r})j}
\|[u,\Delta_j]\cdot\nabla \theta\|_{L^{r}_t L^p_x((0,T)\times \bR^2)} \nonumber \\
& \quad \quad \lesssim c_j 2^{(-\frac{\gamma}{r_3} -
\frac{\gamma}{r_0} + \gamma)j} \|\theta \|_{ \widetilde{L}^{r_1}(
\dot{B}^{\alpha + \frac{\gamma}{r_1}}_{p,q} )} \|\theta \|_{
\widetilde{L}^{r_2}( \dot{B}^{\alpha + \frac{\gamma}{r_2}}_{p,q} )}.
\label{aftercomm}
\end{align}
After we choose $r_3$ such that
\begin{equation} \label{r3}
1 = \frac{1}{r_3} + \frac{1}{r_0},
\end{equation}
we observe that \eqref{aftercomm} implies
\begin{align}
& \| 2^{(-\frac{\gamma}{r_3} + \alpha + \frac{\gamma}{r})j}
\|[u,\Delta_j]\cdot\nabla \theta\|_{L^{r}_t L^p_x((0,T)\times \bR^2)}
\; \|_{l^q} \nonumber \\
& \quad \quad \lesssim
\|\theta \|_{ \widetilde{L}^{r_1}(
\dot{B}^{\alpha + \frac{\gamma}{r_1}}_{p,q} )}
\|\theta \|_{ \widetilde{L}^{r_2}(
\dot{B}^{\alpha + \frac{\gamma}{r_2}}_{p,q} )}. \label{afterlq}
\end{align}

Now we combine \eqref{lq} and \eqref{afterlq} together with \eqref{r3} to conclude
\begin{equation} \label{apriori}
\|\theta\|_{ \widetilde{L}^r(\dot{B}_{p,q}^{ \alpha + \frac{\gamma}{r} }) }
\lesssim
\lambda^{-\frac{1}{r}} \|\theta(0)\|_{\dot{B}_{p,q}^{\alpha}}
+  T^{\frac{1}{r_0}} \;
\|\theta \|_{ \widetilde{L}^{r_1}(
\dot{B}^{\alpha + \frac{\gamma}{r_1}}_{p,q} )}
\|\theta \|_{ \widetilde{L}^{r_2}(
\dot{B}^{\alpha + \frac{\gamma}{r_2}}_{p,q} )}.
\end{equation}
which is our main a priori estimate. In particular, if we denote by
$$
\Lambda(\theta,T)=\|\theta\|_{ \widetilde{L}^2 (\dot{B}_{p,q}^{
\alpha + \frac \gamma 2}) } +\|\theta\|_{
\widetilde{L}^\infty(\dot{B}_{p,q}^{ \alpha }) },
$$
we then have
\begin{equation}
            \label{eq11.49}
\Lambda(\theta,T) \lesssim \|\theta(0)\|_{\dot{B}_{p,q}^{\alpha}} +
T^{\frac{1}{r_0}} \;\Lambda(\theta,T)^2.
\end{equation}
With a help of the a priori estimate \eqref{apriori},  it is standard to
construct a solution of \eqref{qgeq1} by using approximations (see,
for example, \cite{miao}). For the sake of completeness, we give a
sketch of a proof  in the Appendix. We refer to \cite{dong} and \cite{dongli2}
for the proof of the smoothness of $\theta$ in $(0,T]\times \bR^2$.

\subsubsection{Uniqueness}
The proof of the uniqueness part of Proposition \ref{prop-lwp}  is
not much different from that of Proposition \ref{prop-unique}. We
refer the reader to the next section for details.

\subsection{Proof of Proposition \ref{prop-unique}}
Here we establish the uniqueness result for weak solutions to
\eqref{qgeq1}, i.e. Proposition \ref{prop-unique}.
Suppose that $\theta$ and $\theta'$ are two solutions to
\eqref{qgeq1} in $\widetilde{L}^{r_0}_tB^{\alpha}_{p,q}((0,T) \times \bR^2)$ which correspond
to the same initial data $\theta_0(x)$. We denote
$\delta \theta = \theta - \theta^{'}$ and $\delta u = u - u'$, where
$u'=(-{\mathcal R}_2\theta',{\mathcal R}_1\theta')$. Then it follows that:
\begin{equation} \label{d-qg}
\left\{\begin{array}{l l l}
\partial_t \delta \theta + u\cdot \nabla \delta \theta
+ \delta u \cdot \nabla \theta' +
\Lambda^{\gamma} \delta \theta=0, \quad x \in \bR^2, t > 0,\\
\delta u = {\mathcal R}^{\bot} \delta \theta, \\
\delta \theta (x,0)= 0. \end{array}\right.
\end{equation}

We follow the strategy used to derive \eqref{afterlowbd} to obtain
\begin{equation} \label{d-afterlowbd}
\frac d {dt}\|\Delta_j \delta \theta\|_{L^p}
+ \lambda 2^{\gamma j} \|\Delta_j \delta \theta\|_{L^p}
\leq C \left( \|[u,\Delta_j]\cdot \nabla \delta \theta\|_{L^p}
+ \| \Delta_j (\delta u \cdot \nabla \theta')\|_{L^p}\right).
\end{equation}
Since $\delta \theta (x,0) = 0$, Gronwall's inequality
applied on \eqref{d-afterlowbd} implies
$$
\|\Delta_j \delta \theta\|_{L^p}
\leq C \int_{0}^t  e^{-   \lambda 2^{\gamma j}(t-s)}
\left( \| ([u,\Delta_j]\cdot \nabla \delta \theta)(s) \|_{L^p}
+ \| (\Delta_j (\delta u \cdot \nabla \theta')(s) \|_{L^p}\right) \; ds.
$$
We take the $L^{r_0}_t$ norm over the interval of time $(0,T)$
and use Young's inequality to obtain:
\begin{align}
& \|\Delta_j \delta \theta\|_{L^{r_0}_t L^p_x ((0,T) \times {\bR}^2)} \label{d-afterY} \\
& \quad \leq C \| e^{-\lambda 2^{\gamma j} t}\|_{L^{r'}_t(0,T)} \;
\left( \|[u,\Delta_j]\cdot \nabla \delta \theta\|_{
L^{\frac{r_0}{2}}_tL^p_x((0,T)\times \bR^2) }+ \| \Delta_j (\delta u
\cdot \nabla \theta')\|_{ L^{\frac{r_0}{2}}_tL^p_x((0,T)\times
\bR^2) } \right), \nonumber
\end{align}
where $\frac{1}{r'} = 1 - \frac{1}{r_0}$.

Now let us pick $\eta$ such that
\begin{equation} \label{eta}
1 - \frac{\gamma}{r'} - \eta + \frac{4}{p} > 0.
\end{equation}
We bound $\| e^{-\lambda 2^{\gamma j} t}\|_{L^{r'}_t(0,T)}$ from above by
$2^{-\frac{\gamma}{r'}j}$, then
multiply \eqref{d-afterY} by $2^{(\frac{2}{p} - \eta)j}$ and take
take $l^q$ norm with respect to $j$ to infer:
\begin{equation} \label{d-lq}
\|\delta \theta\|_{ \widetilde{L}^{r_0}(\dot{B}_{p,q}^{ \frac{2}{p} -\eta }) }
\lesssim C(I_3 + I_4),
\end{equation}
where
\begin{align*}
I_3 & = \left\| 2^{(\frac{2}{p} - \eta - \frac{\gamma}{r'})j}
\|[u,\Delta_j]\cdot \nabla \delta \theta\|_{L^{\frac{r_0}{2}}_t L^p_x((0,T)\times \bR^2)}
\right\|_{l^q(\mathbb Z)},\\
I_4 & =  \| \delta u \cdot \nabla \theta'\|_{\widetilde{L}^{\frac{r_0}{2}}_t
\dot{B}^{\frac{2}{p} - \eta - \frac{\gamma}{r'}}_{p,q} ((0,T)\times \bR^2) }.
\end{align*}

In order to estimate $I_3$ we apply Lemma \ref{comm1} with
$$
v=\delta \theta,\quad d=2,\quad (r_1, r_2) = (r_0, r_0), \quad
(\rho_1,\rho_2) = (1 - \frac{\gamma}{r'}, - \eta)
$$
and the boundedness of the Riesz transforms as follows
\begin{align*}
&\|[u,\Delta_j]\cdot\nabla \delta \theta\|_{L^{\frac{r_0}{2}}_t L^p((0,T)\times \bR^2)} \\
& \quad \quad \lesssim c_j
2^{-( \frac{2}{p} - \frac{\gamma}{r'} - \eta)j} \;
\|u \|_{ \widetilde{L}^{r_0}(
\dot{B}^{\frac{2}{p} - \frac{\gamma}{r'} + 1}_{p,q} )} \;
\|\delta \theta \|_{ \widetilde{L}^{r_0}(
\dot{B}^{\frac{2}{p} - \eta }_{p,q} )} \\
& \quad \quad \lesssim c_j
2^{-( \frac{2}{p} - \frac{\gamma}{r'} - \eta)j} \;
\|\theta \|_{ \widetilde{L}^{r_0}(
\dot{B}^{\frac{2}{p} - \frac{\gamma}{r'} + 1}_{p,q} )} \;
\|\delta \theta \|_{ \widetilde{L}^{r_0}(
\dot{B}^{\frac{2}{p} - \eta }_{p,q} )},
\end{align*}
where $c_j \in l^q$ is such that $\|c_j\|_{l^q} \leq 1$.
Since
$$
\frac{2}{p} - \frac{\gamma}{r'} + 1 = \alpha,
$$
we obtain
\begin{equation} \label{d-I3}
I_3 \lesssim \|\theta\|_{ \widetilde{L}^{r_0}(
\dot{B}^{\alpha}_{p,q} )} \;
\|\delta \theta \|_{ \widetilde{L}^{r_0}(
\dot{B}^{\frac{2}{p} - \eta }_{p,q} )}.
\end{equation}
On the other hand to estimate $I_4$ we use Lemma \ref{prod}
with
$$ s = \frac{2}{p} - \frac{\gamma}{r'} - \eta, \quad
s_1 = \frac{2}{p} - \eta$$
and the boundedness of the Riesz transforms to obtain
\begin{equation} \label{d-I4}
I_4 \lesssim \|\delta \theta \|_{ \widetilde{L}^{r_0}(
\dot{B}^{\frac{2}{p} - \eta }_{p,q} )} \; \|\theta'\|_{
\widetilde{L}^{r_0}( \dot{B}^{\alpha}_{p,q} )}.
\end{equation}

Now we combine \eqref{d-lq}, \eqref{d-I3} and \eqref{d-I4} to conclude
\begin{equation} \label{d-uniq}
\|\delta \theta\|_{ \widetilde{L}^{r_0}(\dot{B}_{p,q}^{ \frac{2}{p} -\eta }) }
\lesssim \|\delta \theta \|_{ \widetilde{L}^{r_0}(
\dot{B}^{\frac{2}{p} - \eta }_{p,q} )} \;
\left(
 \|\theta \|_{ \widetilde{L}^{r_0}(
\dot{B}^\alpha_{p,q} )} + \|\theta'\|_{ \widetilde{L}^{r_0}(
\dot{B}^{\alpha}_{p,q} )} \right).
\end{equation}
We first look at part (a) of the proposition, i.e. the case $q<\infty$.
As $T \rightarrow 0$, the terms in the parenthesis on the right
hand side of \eqref{d-uniq} go to $0$.  For part (b), i.e. $q=\infty$, from \eqref{d-uniq} and the Minkowski's inequality we get
\begin{equation} \label{d-uniq2}
\|\delta \theta\|_{ \widetilde{L}^{r_0}(\dot{B}_{p,q}^{ \frac{2}{p} -\eta }) }
\lesssim \|\delta \theta \|_{ \widetilde{L}^{r_0}(
\dot{B}^{\frac{2}{p} - \eta }_{p,q} )} \;
\left(
 \|\theta \|_{L^{r_0}(
\dot{B}^{\alpha
}_{p,q} )} + \|\theta'\|_{ L^{r_0}( \dot{B}^{\alpha}_{p,q} )}
\right).
\end{equation}
As $T \rightarrow 0$, the terms in the parenthesis on the right
hand side of \eqref{d-uniq2} go to $0$.
Thus in both cases if $T$ is chosen small enough,
then
$\|\delta \theta\|_{ \widetilde{L}^{r_0}(\dot{B}_{p,q}^{ \frac{2}{p} -\eta })
((0,T) \times \bR^2 )} = 0$, which in turn implies
$\delta \theta = 0$. Now the standard continuity argument
can be employed to show that $\delta \theta (x, t) = 0$ for all
$x \in \bR^2$ and $t \geq 0$.

\subsection{Proof of Theorem \ref{thm1}}\label{proofofthm}

We prove the theorem by a contradiction. Assume $\theta$ is not a regular solution in $(0,T)\times \bR^2$.
Without loss of generality, one may assume $T$ is the first blowup time.
Since $\theta\in {L}^{r_0}_tB^{\alpha}_{p,\infty}$, for almost all $s\in (0,T)$
we have $\theta(s,\cdot)\in B^{\alpha}_{p,\infty}$. For any such $s$,
consider the initial value problem \eqref{qgeq1} with initial data $\theta_0=\theta(s,\cdot)$.
By applying the local well-posedness result (Proposition \ref{prop-lwp}), \eqref{qgeq1} has a unique weak solution
$$
\bar\theta\in
\widetilde{L}^2((0,T_s);B^{\alpha+\frac \gamma 2}_{p,\infty})\cap
\widetilde{L}^\infty((0,T_s);B^{\alpha}_{p,\infty})\cap
\widetilde{L}^{r_0}((0,T_s);B^{\alpha+\frac \gamma {r_0}}_{p,\infty})
$$
for some
\begin{equation}
                    \label{TS}
T_s\geq c\|\theta(s,\cdot)\|_{{\dot B^{\alpha}_{p,\infty}}}^{-r_0}
\end{equation}
with a constant $c>0$ independent of $s$. Moreover, by simple
embedding relations we have
$$
\bar\theta\in \widetilde{L}^{r_0}((0,T_s);B^{\alpha+\frac \gamma
{r_0}}_{p,\infty})\hookrightarrow
\widetilde{L}^{r_0}((0,T_s);B^{\alpha}_{p,r_0}) \hookrightarrow
L^{r_0}((0,T_s);B^{\alpha}_{p,\infty}).
$$
Now we apply the uniqueness result Proposition \ref{prop-unique} and
get $\bar\theta(\cdot,\cdot)=\theta(s+\cdot,\cdot)$. The last
equality and \eqref{TS} imply that
\begin{equation*}
            \label{eq3.56}
T-s\geq c\|\theta(s,\cdot)\|_{{\dot  B^{\alpha}_{p,\infty}}}^{-r_0}.
\end{equation*}
Therefore, for almost all $s\in (0,T)$, we have
$$
\|\theta(s,\cdot)\|_{{\dot  B^{\alpha}_{p,\infty}}}\geq c^{\frac 1 {r_0}}(T-s)^{-\frac 1 {r_0}},
$$
which contradicts the condition $\theta\in L^{r_0}_t((0,T);B^{\alpha}_{p,\infty}(\bR^2))$. The theorem is proved.

\mysection{Appendix} \label{apen}

The appendix is devoted to the proof of the existence part in
Theorem \ref{prop-lwp}. Consider the following successive
approximations: $ \theta^0\equiv u^0\equiv 0$, and for
$k=0,1,2,\cdots$,
\begin{equation}
                                        \label{appro1}
\left\{\begin{array}{l l}
\theta_t^{k+1}+u^k\cdot\nabla\theta^{k+1}_x+(-\Delta)^{\gamma/2}
\theta^{k+1}=0
\quad & x \in \bR^2, t \in (0,\infty),\\
u^{k+1}=(-\mathcal R_2\theta^{k+1},\mathcal R_1\theta^{k+1})\\
\theta^{k+1}(0,x)=\theta_0(x)\quad & x\in \bR,\end{array}\right.
\end{equation}
Similar to \eqref{eq11.49}, we have
\begin{equation}
            \label{eq11.59}
\Lambda(\theta^{k+1},T) \lesssim
\|\theta(0)\|_{\dot{B}_{p,q}^{\alpha}} +  T^{\frac{1}{r_0}}
\;\Lambda(\theta^k,T)\Lambda(\theta^{k+1},T).
\end{equation}
If we choose $T=c\|\theta_0\|_{{\dot B^{\alpha}_{p,q}}}^{-r_0}$ for
small $c>0$ depending on $\lambda$ and the implicit constant in
\eqref{eq11.59}, it then holds that for any $k=0,1,2,\cdots$,
\begin{equation}
            \label{eq12.46}
\Lambda(\theta^k,T)\lesssim \|\theta_0\|_{\dot{B}_{p,q}^{\alpha}}.
\end{equation}
Due to the $L^p$ maximum principle for \eqref{qgeq1}, we also have
\begin{equation}
            \label{eq4.36}
\|\theta^k\|_{ \widetilde{L}^2 ({B}_{p,q}^{ \alpha + \frac \gamma
2}) } +\|\theta^k\|_{ \widetilde{L}^\infty({B}_{p,q}^{ \alpha
})}\lesssim \|\theta_0\|_{{B}_{p,q}^{\alpha}}.
\end{equation}

Now by the first equation of \eqref{qgeq1} and Lemma \ref{prod}, we
have $\theta^k_t\in L^\infty(B_{p,q}^{\frac \gamma {r_0}-\gamma})$
with uniformly bounded norm for $k=1,2,3,\cdots$. Since we also have
$\theta^k\in L^\infty(B_{p,q}^{\alpha})$ with uniform bounded norm,
due to Lion-Aubin compactness theorem, there exists a subsequence,
which we still denote by $\theta^k$, and $\theta=\theta(t,x)$ such
that
$$
\theta^k\to \theta \quad \text{in}\,\,L^p_{\text{loc}}((0,T)\times
\bR^2).
$$
Moreover, $\theta$ satisfies \eqref{qgeq1} in the sense of
distributions and
\begin{equation}
            \label{eq4.40}
\|\theta\|_{ \widetilde{L}^2 ({B}_{p,q}^{ \alpha + \frac \gamma 2})
} +\|\theta\|_{ \widetilde{L}^\infty({B}_{p,q}^{ \alpha })}\lesssim
\|\theta_0\|_{{B}_{p,q}^{\alpha}}.
\end{equation}
As in \cite{dongli2}, $\theta$ is smooth in $(0,T)\times \bR^2$ and
satisfies the first equation of \eqref{qgeq1} in the same region in
the classical sense.

We claim $\theta\in C([0,T);B_{p,q}^\alpha)$ if $q<\infty$. Observe
that from \eqref{Eq5.06}, Lemma \ref{prod} and Lemma \ref{bern} $i)$
we know for $j=1,2,3,\cdots$,
$$\partial_t
\Delta_j \theta\in L^\infty((0,T);B_{p,q}^\alpha).$$ It follows
immediately that
\begin{equation}
                \label{eq5.21}
\Delta_j \theta\in C([0,T);B_{p,q}^\alpha).
\end{equation}
On the other hand, \eqref{eq4.36} implies that as $k\to \infty$
$$
\sum_{|j|\leq k}\Delta_j \theta\to \theta\quad
\text{in}\,\,L^\infty((0,T);B_{p,q}^\alpha).$$ This together with
\eqref{eq5.21} proves the claim.


\end{document}